\documentclass[letterpaper,conference]{ieeeconf}
\IEEEoverridecommandlockouts                              % This command is only
\usepackage[utf8]{inputenc}
\usepackage{textcomp}

\usepackage{graphicx}
\usepackage{amsmath}
\usepackage[version=4]{mhchem}
\usepackage{siunitx}
\usepackage{graphicx}
\usepackage{longtable,tabularx}
\usepackage[style=ieee ,sorting=none,maxbibnames=99,giveninits, doi=false]{biblatex}
\let\labelindent\relax
\usepackage{amsmath} % assumes amsmath package installed
\usepackage{amsfonts}
\usepackage{amsthm}

\newtheorem{theorem}{Theorem}

\newtheorem{proposition}{Proposition}[section]
\theoremstyle{definition}
\newtheorem{definition}{Definition}[section]
\newtheorem{exmp}{Example}[section]
\newtheorem{problem}{Problem}[section]
\newtheorem{remark}{Remark}
\newtheorem{lemma}{Lemma}[section]

\usepackage{float} 
\usepackage[skip=0em,font=footnotesize]{caption}
\usepackage[margin = 1in]{geometry}

\usepackage{tikz}
\usetikzlibrary{shapes,arrows,calc,positioning}
\tikzstyle{bigblock} = [draw, fill=blue!20, rectangle, 
    minimum height=6em, minimum width=8em]
\tikzstyle{medblock} = [draw, fill=blue!20, rectangle, 
    minimum height=4em, minimum width=4em]    
\tikzstyle{mux} = [draw, fill=black!20, rectangle, 
    minimum height=5em, minimum width=0.1em]    
\tikzstyle{smallblock} = [draw, fill=blue!20, rectangle, 
    minimum height=2em, minimum width=3em]
    
\tikzstyle{data_block} = [draw, fill=green!20, rectangle, 
    minimum height=2em, minimum width=3em]
\tikzstyle{ops_block} = [draw, fill=blue!20, rectangle, 
    minimum height=2em, minimum width=3em]    
\tikzstyle{est_block} = [draw, fill=red!20, rectangle, 
    minimum height=2em, minimum width=3em]    
    
\tikzstyle{sum} = [draw, fill=blue!20, circle, node distance=1cm,minimum height=0.5cm]
\tikzstyle{signal} = [coordinate]
\tikzstyle{pinstyle} = [pin edge={to-,thin,black}]
\tikzstyle{block} = [draw, fill=blue!20, rectangle, 
    minimum height=3em, minimum width=9em]
\tikzstyle{blockS} = [draw, fill=blue!20, rectangle, 
    minimum height=3em, minimum width=4em]    
\tikzstyle{input} = [coordinate]
\tikzstyle{output} = [coordinate]
\usetikzlibrary{matrix}

\usetikzlibrary{positioning}
\usetikzlibrary{math}

\usepackage{hyperref}
\usepackage{xcolor}
\hypersetup{
    colorlinks,
    linkcolor={blue!100!black},
    citecolor={blue!50!black},
    urlcolor={blue!80!black}
}

\newcommand{\bc}{\begin{center}}
\newcommand{\ec}{\end{center}}
\newcommand{\benum}{\begin{enumerate}}
\newcommand{\eenum}{\end{enumerate}}
\newcommand{\nn}{\nonumber}
\newcommand{\matl}{\left[ \begin{array}}
\newcommand{\matr}{\end{array} \right]}

\renewcommand{\matl}{\begin{bmatrix}}
\renewcommand{\matr}{\end{bmatrix}}

\newcommand{\matls}{\left[ \begin{smallmatrix}}
\newcommand{\matrs}{\end{smallmatrix} \right]}
\newcommand{\isdef}{\stackrel{\triangle}{=}}

\newcommand{\rmT}{{\rm T}}

\newcommand{\BBE}{{\mathbb E}}

\newcommand{\BBR}{{\mathbb R}}

\newcommand{\SF}{{\mathcal F}}
\newcommand{\SG}{{\mathcal G}}
\newcommand{\SH}{{\mathcal H}}

\newcommand{\SO}{{\mathcal O}}

\usepackage{enumitem,amssymb}
\newlist{todolist}{itemize}{2}
\setlist[todolist]{label=$\square$}
\usepackage{pifont}

\DeclareMathOperator{\rank}{rank}

\newcommand{\PP}{\mathbb{P}}
\usepackage[colorinlistoftodos,textwidth=1 in,textsize=footnotesize]{todonotes}
\title{Asymptotically Vanishing Excitation Without Loss of Persistency}
\title{Asymptotically Vanishing Excitation \\ Without Loss of Parameter Convergence}

\author{
Parham Oveissi 
and
Ankit Goel
\thanks{Parham Oveissi is a graduate student in the Department of Mechanical Engineering, University of Maryland, Baltimore County, 1000 Hilltop Circle, Baltimore, MD 21250. {\tt\small parhamo1@umbc.edu}}%
\thanks{Ankit Goel is an Assistant Professor in the Department of Mechanical Engineering, University of Maryland, Baltimore County,1000 Hilltop Circle, Baltimore, MD 21250. {\tt\small ankgoel@umbc.edu }}%
}

\begin{document}

\maketitle

% \listoftodos

%%%%%%%%%%%%%%%%%%%%%%%%%%%%%%%%%%%%%%%%%%%%%%%%%%%%%%%%%%%%%%%%%%%%%%%%%%%%%%%%
\begin{abstract}
Parameter convergence in recursive identification requires persistent excitation, which is typically enforced with a probing signal that degrades performance if retained indefinitely.
Regulating the probing amplitude with the one-step prediction error is insufficient, since a small prediction error does not imply parameter convergence.
This paper proposes a self-regulating excitation law that scales the probing amplitude with the associated covariance matrix, so that the excitation vanishes only as fast as the estimator's own measure of remaining parameter uncertainty allows.
The key idea is that the excitation amplitude must decay more slowly than the covariance measure, the largest eigenvalue of the covariance matrix in this work, for parameter convergence to be preserved.
% ; the proposed law enforces this by construction.
%
Under the proposed excitation law, the excitation amplitude and the covariance measure are shown to jointly converge to zero while the parameter estimate converges to the true parameter, with the excitation vanishing more slowly than the covariance measure throughout.
A scalar exponent in the law trades the rate of covariance decay against the amount of excitation retained.
Numerical results confirm both properties.
\end{abstract}

\textit{\bf keywords:} system identification, persistent excitation, parameter estimation, self-regulating excitation.

%%%%%%%%%%%%%%%%%%%%%%%%%%%%%%%%%%%%%%%%%%%%%%%%%%%%%%%%%%%%%%%%%%%%%%%%%%%%%%%%
\section{Introduction}
Indirect adaptive control relies on online parameter estimation to construct a control law, and the quality of that control law is only as good as the quality of the underlying identification~\cite{astrom1995adaptive}.
Recursive identification requires persistently exciting data to guarantee parameter convergence~\cite{goelrecursive2020}, so a dedicated excitation/probing signal is typically injected into the system input alongside the nominal control signal~\cite{adetola2006excitation,cao2007intelligent}.
Although this excitation signal is necessary for identification, it has downsides.
An ever-present excitation signal degrades tracking performance and increases control effort~\cite{adetola2006excitation}, and retaining it after the parameters have converged serves no further identification purpose while continuing to degrade closed-loop performance.

The central problem addressed in this paper is therefore how to regulate the \textit{amount} and \textit{duration} of excitation, rather than whether to excite at all.
Existing self-regulating schemes set the excitation amplitude from the norm of the tracking and prediction error over the previous excitation period~\cite{adetola2006excitation,cao2007intelligent}.
The amplitude decreases as the error vanishes and increases when a parameter change raises the error.
The error is a proxy for the parameter error, since a small error does not imply a small parameter error when the regressor lacks information in some parameter direction~\cite{goelrecursive2020}.
These schemes therefore keep a small positive amplitude floor so that the excitation remains sufficiently rich and reactivation remains possible~\cite{adetola2006excitation,cao2007intelligent}.

% The central problem addressed in this paper is therefore how to regulate the \textit{amount} and \textit{duration} of excitation, rather than whether to excite at all.
% %
% %
% A natural approach is to regulate the probing amplitude using the one-step prediction error, reducing excitation as the error shrinks~\cite{adetola2006excitation,cao2007intelligent}.
% %
% This approach, however, is insufficient.
% A small prediction error does not imply that the parameter estimate is close to the true parameter, since the regressor sequence observed so far may still lack information in some parameter direction~\cite{goelrecursive2020}.
% %
% Consequently, an excitation signal driven by prediction error alone can allow the excitation signal to vanish before parameter convergence is achieved, yielding an incorrect estimate.
% %
% % Existing self-regulating excitation schemes that avoid this failure mode typically attenuate the excitation signal as convergence is judged to occur and reactivate it when needed again~\cite{adetola2006excitation,cao2007intelligent}.
% Existing self-regulating excitation schemes typically monitor a threshold-based criterion for whether parameter convergence has occurred, attenuating the excitation signal once the criterion is satisfied and reactivating it if the criterion is later violated~\cite{adetola2006excitation,cao2007intelligent}.
% \todo{the details need to be confirmed}
% %
% This retains the excitation mechanism indefinitely, rather than allowing it to vanish once identification is genuinely complete.
%
%

This paper is motivated by a simple observation.
If the excitation amplitude decays before parameter convergence is achieved, convergence cannot subsequently be achieved.
The excitation amplitude must therefore decay no faster than parameter convergence itself.

Since the recursive least-squares covariance measure reflects the estimator's own online measure of remaining parameter uncertainty, tying the excitation amplitude to this covariance measure, rather than to the prediction error, ensures that the excitation amplitude decays no faster than parameter uncertainty is actually being resolved.
This constraint necessarily slows parameter convergence relative to a constant, undiminished excitation amplitude.
However, it removes the ever-present perturbation that a constant excitation amplitude imposes on the closed loop once identification is no longer needed.
This paper proposes such a self-regulating excitation law.
The resulting excitation amplitude is shown to guarantee persistent excitation in expectation for every nonzero value, and to converge to zero while the parameter estimate simultaneously converges to the true parameter, with the excitation amplitude vanishing more slowly than the covariance measure itself.
A scalar exponent in the law is shown to trade the rate of covariance decay against the amount of excitation retained, with no intermediate choice improving both simultaneously.

The remainder of this paper is organized as follows.
Section~\ref{sec:preliminaries} reviews recursive least-squares identification, persistent excitation, and the effect of a constant white-noise excitation on the regressor and the covariance.
Section~\ref{sec:problem_statement} formalizes the excitation-regulation problem addressed in this paper.
Section~\ref{sec:self_regulating_persistency} presents the proposed self-regulating excitation law, a bound on the information it generates, and the resulting convergence guarantees.
Section~\ref{sec:numerical_results} illustrates the results numerically, and Section~\ref{sec:conclusions} concludes the paper.

\section{Mathematical Preliminaries}
\label{sec:preliminaries}

\subsection{Recursive Identification and Persistent Excitation}

Consider the discrete-time linear time-invariant system
\begin{align}
    y_k
        &=
        G(q)u_k,
    \label{eq:system}
\end{align}
where $u_k\in\BBR$ is the input, $y_k\in\BBR$ is the output, and $G(q)$ 
\begin{align}
    G(q)
        &=
        \frac{
            b_1 q^{n-1}
            +
            b_2 q^{n-2}
            +
            \cdots
            +
            b_n
        }{
            q^n
            +
            a_1 q^{n-1}
            +
            a_2 q^{n-2}
            +
            \cdots
            +
            a_n
        }.
    \label{eq:transfer_function}
\end{align}
is an unknown $n$th-order transfer function.
Note that  $q$ denotes the forward-shift operator.

Throughout, $G(q)$ is assumed to be asymptotically stable, so that its response to a bounded deterministic input or a wide-sense-stationary stochastic input settles to a well-defined steady state.

Defining the \textit{unknown parameter vector}
$$
\theta
    \isdef
    \matl
        a_1 &
        \cdots &
        a_n &
        b_1 &
        \cdots &
        b_n
    \matr^\rmT
    \in
    \BBR^{2n},
$$
and the \textit{regressor}
$$
\phi_k
    \isdef
    \matl
        -y_{k-1} 
        \ldots 
        -y_{k-n} \
        u_{k-1} 
        \cdots 
        u_{k-n}
    \matr^\rmT
    \in
    \BBR^{2n},
$$
it follows that 
\begin{align}
    y_k
        &=
        \phi_k^\rmT\theta.
    \label{eq:regression}
\end{align}

The classical identification problem is to estimate $\theta$ with measurements of $y_k$ and $u_k.$
The recursive least-squares estimate of $\theta$ is given by
\begin{align}
    \hat\theta_{k+1}
        &=
        \hat\theta_k
        +
        \frac{P_k\phi_k}{1+\phi_k^\rmT P_k\phi_k}
        \left(y_k-\phi_k^\rmT\hat\theta_k\right),
    \label{eq:rls_theta}
\end{align}
where the covariance $P_k$ satisfies
\begin{align}
    P_{k+1}
        &=
        P_k
        -
        \frac{
            P_k\phi_k\phi_k^\rmT P_k
        }{
            1+\phi_k^\rmT P_k\phi_k
        },
    \label{eq:rls_P}
\end{align}
As shown in \cite{goelrecursive2020}, the covariance $P_k$ also satisfies
\begin{align}
    P_{k+1}^{-1}
        =
        P_k^{-1}
        +
        \phi_k\phi_k^\rmT.
\end{align}

The following results appear in detail in \cite{goelrecursive2020}.
The regressor is \textit{persistently exciting} if there exists $N>0$ and $\alpha>0$ such that, for all $k$,
\begin{align}
    \sum_{i=k}^{k+N-1}
        \phi_i\phi_i^\rmT
        &\geq
        \alpha I.
    \label{eq:PE_condition}
\end{align}
% 
% and the definition of a persistently exciting regressor sequence, that is,  are established in \cite{goelrecursive2020}.
%
In particular, \cite[Prop.~3, Thm.~7]{goelrecursive2020} show that, under \eqref{eq:PE_condition}, $P_k\rightarrow 0$ and $\hat\theta_k\rightarrow\theta$.
%
% The following restates only the notation needed for the remainder of this section.

The \textit{one-step prediction error} is defined as
$z_k
        \isdef
            y_k
            -
            \phi_k^\rmT\hat\theta_k.$
% \begin{align}
%             .
%     \label{eq:prediction_error}
% \end{align}
By \cite[Prop.~6, Ex.~4]{goelrecursive2020}, $z_k\rightarrow 0$ regardless of whether $\{\phi_k\}$ is persistently exciting.
Consequently,
\begin{align}
    z_k
        &\rightarrow
        0
    \;\not\Rightarrow\;
    \hat\theta_k
        \rightarrow
        \theta.
    \label{eq:prediction_not_parameter_convergence}
\end{align}
A small prediction error indicates that $\hat\theta_k$ accurately predicts the particular output sequence observed, but it does not certify that the regressor sequence $\{\phi_k\}$ has been persistently exciting. Consequently, an excitation law that regulates $\sigma_k$ using $z_k$ alone cannot be guaranteed to preserve parameter convergence.

The Fisher information matrix
\begin{align}
    \SF_k
        &\isdef
        \sum_{i=0}^{k-1}
        \phi_i\phi_i^\rmT,
    \label{eq:fisher_information}
\end{align}
satisfies
\begin{align}
    P_k^{-1}
        &=
        P_0^{-1}
        +
        \SF_k.
    \label{eq:P_F_relation}
\end{align}
Thus, the reduction of the covariance matrix $P_k$ is determined directly by the accumulation of Fisher information.

\begin{definition}
\label{def:almost_surely}
Let $\PP$ denote probability, and let $x$ be a random variable.
A property of $x$ is said to hold \textit{almost surely} if it holds with probability one under $\PP$.
Equivalently, the property holds for every value $x$ can take except possibly for a set of values whose total probability is zero, and such exceptional values can therefore be disregarded in practice.
Equivalently, the set of values of $x$ for which the property fails has measure zero.
For example, if $x$ is uniformly distributed on $[0,1]$, then $x\neq0.5$ holds almost surely, since the single value $x=0.5$ has probability zero, even though $x=0.5$ remains a possible outcome.
\end{definition}

\begin{lemma}
\label{lem:F_monotone}
The Fisher information matrix $\SF_k$ satisfies $\SF_{k+1}\succeq \SF_k$ for all $k$, surely.
Consequently, $\lambda_{\min}(\SF_k)$ is a nondecreasing real-valued sequence, for every sample path.
\end{lemma}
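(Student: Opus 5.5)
The plan is to argue directly from the definition \eqref{eq:fisher_information}, with no probabilistic input; this is why the statement holds surely. From the definition,
\begin{align*}
    \SF_{k+1}-\SF_k = \phi_k\phi_k^\rmT .
\end{align*}
For any $v\in\BBR^{2n}$ we have $v^\rmT\phi_k\phi_k^\rmT v=(\phi_k^\rmT v)^2\geq 0$, so $\phi_k\phi_k^\rmT\succeq 0$ and hence $\SF_{k+1}\succeq\SF_k$. This identity and inequality hold for every realization of the regressor, whatever the distribution of the excitation. So the first claim holds pointwise on every sample path, which is stronger than almost surely in the sense of Definition~\ref{def:almost_surely}.

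For the eigenvalue claim, I will use the Rayleigh quotient characterization of the smallest eigenvalue of a symmetric matrix. Note that $\SF_k$ is symmetric, being a sum of symmetric rank-one terms. Fix a sample path and take a unit vector $v$ attaining $\lambda_{\min}(\SF_{k+1})$. Then
\begin{align*}
    \lambda_{\min}(\SF_{k+1}) = v^\rmT\SF_{k+1}v \geq v^\rmT\SF_k v \geq \min_{\|w\|=1} w^\rmT\SF_k w = \lambda_{\min}(\SF_k).
\end{align*}
Equivalently, I could cite Weyl's monotonicity theorem for the Loewner order.

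It remains to note that $\lambda_{\min}(\SF_k)$ is a real number for each $k$, since $\SF_k$ is symmetric. The base case uses the convention that the empty sum gives $\SF_0=0$, so $\lambda_{\min}(\SF_0)=0$. The sequence is therefore well defined, real-valued and nondecreasing on every sample path.

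No step here poses a real obstacle. The only point needing care is to stress that no expectation or measure-zero exclusion is used anywhere, which justifies ``surely'' and ``for every sample path'' in place of ``almost surely.''
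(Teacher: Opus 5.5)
Your proposal is correct and matches the paper's proof: the increment $\SF_{k+1}-\SF_k=\phi_k\phi_k^\rmT$ is positive semidefinite on every realization. The second claim then follows from monotonicity of $\lambda_{\min}$ under the Loewner order, which the paper cites and your Rayleigh-quotient step proves inline.
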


\begin{proof}
It follows from \eqref{eq:fisher_information} that
$\SF_{k+1}-\SF_k=\phi_k\phi_k^\rmT \succeq 0$, since $\phi_k\phi_k^\rmT$ is positive semidefinite for every $\phi_k$.
Monotonicity of $\lambda_{\min}(\cdot)$ under the Loewner order then implies that $\lambda_{\min}(\SF_{k+1})\geq\lambda_{\min}(\SF_k)$.
\end{proof}

% Let $\SH_k$ denote the information available from $\{u_i,y_i\}_{i=0}^{k}$.
% %
% The following standing assumption ensures that the regressor's size does not grow without bound in a mean-square sense, and is used later to bound the second moment of the increments of the Fisher information.

% \begin{assumption}
% \label{as:moment_bound}
% There exists $M>0$ such that
% \begin{align}
%     \BBE
%     \left[
%         \|\phi_k\|^4
%         \mid
%         \SH_{k-1}
%     \right]
%         &\leq
%         M
%     \label{eq:moment_bound}
% \end{align}
% for all $k$, almost surely.
% \end{assumption}

\subsection{Persistent Excitation by Deterministic Multisine Inputs}

Consider the multisine input
\begin{align}
    u_k
        &=
        \sum_{j=1}^{n}
        a_j
        \sin
        \left(
            \omega_j k+\varphi_j
        \right),
    \label{eq:multisine_input}
\end{align}
where $a_j\neq 0$ and the frequencies
$\omega_1,\ldots,\omega_n$
are distinct and belong to $(0,\pi)$.

The following result appears in \cite{pintelon2012system}.
\begin{lemma}
\label{lem:sinusoidal_PE}
Let $\omega_1,\ldots,\omega_n$ be distinct frequencies in $(0,\pi)$, and define
\begin{align}
    s_k
    \isdef
    \matl
        \sin(\omega_1 k) \\
        \cos(\omega_1 k) \\
        \vdots \\
        \sin(\omega_n k) \\
        \cos(\omega_n k)
    \matr
    \in
    \BBR^{2n}.
    \label{eq:s_k_def}
\end{align}
Then the sequence $\{s_k\}$ is persistently exciting.
\end{lemma}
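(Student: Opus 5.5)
The plan is to show directly that there exist a window length $N$ and a constant $\alpha>0$ such that $\sum_{i=k}^{k+N-1} s_i s_i^\rmT \succeq \alpha I$ for every $k$. Equivalently, for every unit vector $v\in\BBR^{2n}$ and every $k$, the quantity $\sum_{i=k}^{k+N-1}(v^\rmT s_i)^2$ must be bounded below by $\alpha$. Write $v^\rmT s_i=\sum_{j=1}^n (c_j\sin\omega_j i + d_j\cos\omega_j i)$. Using the identities for $\sin^2$, $\cos^2$ and for products of sinusoids, expand the windowed average $\frac{1}{N}\sum_{i=k}^{k+N-1} s_i s_i^\rmT$ entrywise. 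The diagonal entries equal $\tfrac12$ plus terms of the form $\pm\frac{1}{2N}\sum_i \cos(2\omega_j i)$ or $\frac{1}{2N}\sum_i\sin(2\omega_j i)$. The off-diagonal entries are sums of terms $\frac{1}{2N}\sum_i \cos((\omega_j\pm\omega_l)i)$ and the corresponding $\sin$ terms, together with the within-block entry $\frac{1}{2N}\sum_i\sin(2\omega_j i)$.

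The key step is a uniform geometric-sum bound. For any $\beta\not\equiv 0 \pmod{2\pi}$,
\begin{align*}
\Bigl|\sum_{i=k}^{k+N-1} e^{\jmath\beta i}\Bigr| \le \frac{2}{|1-e^{\jmath\beta}|} = \frac{1}{|\sin(\beta/2)|},
\end{align*}
and this bound holds independently of $k$. (If $\jmath$ is undefined in the paper, I would write $\sqrt{-1}$ explicitly.) The hypotheses are exactly what make every frequency appearing above nonzero modulo $2\pi$. Since $\omega_j\in(0,\pi)$, we have $2\omega_j\in(0,2\pi)$. Since the frequencies are distinct, $\omega_j-\omega_l\in(-\pi,\pi)\setminus\{0\}$ for $j\neq l$. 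Also $\omega_j+\omega_l\in(0,2\pi)$. It follows that there is a constant $C$, depending only on the finitely many frequencies, such that every oscillatory sum is bounded by $C$ uniformly in $k$ and $N$.

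The conclusion is then immediate. We have $\frac1N\sum_{i=k}^{k+N-1}s_is_i^\rmT = \tfrac12 I + E_{k,N}$, where each entry of $E_{k,N}$ is at most $C'/N$ in absolute value. This gives $\|E_{k,N}\|\le 2nC'/N$. Choose $N$ large enough that $2nC'/N\le \tfrac14$. Then $\sum_{i=k}^{k+N-1}s_is_i^\rmT\succeq \tfrac N4 I$ for all $k$, which is \eqref{eq:PE_condition} with $\alpha=N/4$.

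The only delicate point is the exclusion of the endpoints $0$ and $\pi$. At those frequencies $\sin(\omega_j i)\equiv0$ or $\cos(2\omega_j i)\equiv 1$, and the diagonal entries fail to approach $\tfrac12$. I expect this bookkeeping to be the main, though mild, obstacle: one must verify that the four types of frequency combinations $2\omega_j$, $\omega_j\pm\omega_l$ are all bounded away from $2\pi\mathbb{Z}$. Everything else follows routinely from the uniform bound.
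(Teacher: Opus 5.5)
Your proof is correct, and it takes a genuinely different route from the paper. The paper argues qualitatively. It says that $\sin(\omega_j k)$ and $\cos(\omega_j k)$ are linearly independent, that sinusoids at distinct frequencies in $(0,\pi)$ are linearly independent, and that ``consequently'' some $N$ and $\alpha_s$ give the window bound for all $k$. The passage from linear independence of the infinite sequences to a lower bound uniform over all windows is left implicit. One way to fill it is to note that $s_{i+k}=R_k s_i$ with $R_k$ a block-diagonal rotation, so every windowed Gram matrix is orthogonally similar to the one starting at $0$. A Vandermonde argument on the $2n$ distinct points $e^{\pm\jmath\omega_j}$ then gives nonsingularity already for $N=2n$.

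Your averaging argument instead writes the windowed Gram matrix as $\tfrac{N}{2}I$ plus a perturbation bounded independently of both $k$ and $N$, using the geometric-sum bound. This handles uniformity in $k$ automatically and yields explicit constants: $\alpha=N/4$, and $N$ determined by $n$ and the minimum of $|\sin(\beta/2)|$ over $\beta\in\{2\omega_j,\ \omega_j\pm\omega_l\}$. It also shows exactly where distinctness and the exclusion of $0$ and $\pi$ are used. The price is that your $N$ can be large when frequencies nearly coincide or approach $0$ or $\pi$. The completed linear-independence route gives the minimal window $N=2n$, but with a nonexplicit $\alpha$.
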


\begin{proof}
For each $\omega_j$, the sequences
$\sin(\omega_j k)$ and $\cos(\omega_j k)$ are linearly independent.
Furthermore, sinusoidal sequences associated with distinct frequencies in $(0,\pi)$ are linearly independent.
Consequently, there exist an integer $N>0$ and a scalar $\alpha_s>0$ such that
\begin{align}
    \sum_{i=k}^{k+N-1}
        s_i s_i^\rmT
        &\geq
        \alpha_s I_{2n}
    \label{eq:sinusoidal_basis_PE}
\end{align}
for all $k$.
Thus, $\{s_k\}$ is persistently exciting.
\end{proof}

Since $G(q)$ is LTI, the steady-state response to each sinusoidal component of $u_k$ is a sinusoid at the same frequency.
Therefore, the steady-state output can be written as
\begin{align}
    y_k
        &=
        \sum_{j=1}^{n}
        \bar a_j
        \sin
        \left(
            \omega_j k+\bar\varphi_j
        \right),
    \label{eq:multisine_output}
\end{align}
where $\bar a_j$ and $\bar\varphi_j$ are determined by $G(e^{\jmath\omega_j})$ and by the corresponding input amplitude $a_j$ and phase $\varphi_j$.

Each delayed input $u_{k-\ell}$ and delayed output $y_{k-\ell}$ can therefore be written as a linear combination of
$$
\sin(\omega_1 k),\,
\cos(\omega_1 k),\,
\ldots,\,
\sin(\omega_n k),\,
\cos(\omega_n k).
$$
Hence, with $s_k$ given by \eqref{eq:s_k_def},
there exists a matrix
$
H
    \in
    \BBR^{2n\times 2n}
$
such that the steady-state regressor satisfies
\begin{align}
    \phi_k
        &=
        H s_k.
    \label{eq:regressor_sinusoidal_mapping}
\end{align}

The matrix $H$ is determined by the input amplitudes and phases, the
selected frequencies, the delays appearing in the regressor, and the
frequency response of $G(q)$ at the frequencies
$\omega_1,\ldots,\omega_n$.

\begin{proposition}
\label{prop:multisine_rank}
Consider the system \eqref{eq:system} with regressor
$\phi_k\in\BBR^{2n}$ defined by \eqref{eq:regression}.
Assume that $G(q)$ is driven by the multisine input \eqref{eq:multisine_input}.
% Let $u_k$ be given by \eqref{eq:multisine_input}.
Let $H\in\BBR^{2n\times 2n}$ satisfy
\eqref{eq:regressor_sinusoidal_mapping}.
If
$$
\rank H = 2n,
$$
then the steady-state regressor sequence $\{\phi_k\}$ is persistently
exciting.
\end{proposition}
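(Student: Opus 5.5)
The plan is to transfer the persistency of excitation of the sinusoidal basis $\{s_k\}$, established in Lemma~\ref{lem:sinusoidal_PE}, to the regressor through the linear map \eqref{eq:regressor_sinusoidal_mapping}. Using $\phi_k = H s_k$, I will write the windowed sum of regressor outer products as a congruence transformation of the windowed sum for $s_k$:
\begin{align}
    \sum_{i=k}^{k+N-1}\phi_i\phi_i^\rmT
        =
        H\left(\sum_{i=k}^{k+N-1} s_i s_i^\rmT\right)H^\rmT,
\end{align}
where $N$ is the window length supplied by Lemma~\ref{lem:sinusoidal_PE}. Because $N$ does not depend on $k$, the same $N$ will serve for the regressor.

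Next, I will apply the Loewner bound \eqref{eq:sinusoidal_basis_PE}. Since $A\succeq B$ implies $HAH^\rmT\succeq HBH^\rmT$, the sum above dominates $\alpha_s HH^\rmT$ for every $k$. It then remains to bound $HH^\rmT$ from below. Because $\rank H = 2n$ and $H$ is square, $H$ is nonsingular, so $HH^\rmT$ is positive definite. For any unit vector $x$,
\begin{align}
    x^\rmT HH^\rmT x = \|H^\rmT x\|^2 \ge \sigma_{\min}(H)^2 > 0 .
\end{align}
Hence $HH^\rmT \succeq \sigma_{\min}(H)^2 I$. This yields \eqref{eq:PE_condition} with $\alpha = \alpha_s\sigma_{\min}(H)^2>0$ and the same $N$, uniformly in $k$.

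The main obstacle is conceptual rather than computational. The representation $\phi_k = Hs_k$ with a constant $H$ holds only in steady state, and the phases $\varphi_j$ are absorbed into $H$ rather than into $s_k$. I will therefore state explicitly that the conclusion concerns the steady-state regressor sequence, which is exactly the statement's scope, and that the phase-shifted sinusoids are expanded via angle-addition identities into the fixed basis \eqref{eq:s_k_def}. With the mapping taken as given by hypothesis, the remaining argument is just the congruence bound above.
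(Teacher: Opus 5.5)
Your proposal is correct and follows the paper's proof essentially step for step. It writes $\sum_{i=k}^{k+N-1}\phi_i\phi_i^\rmT$ as a congruence $H(\cdot)H^\rmT$ of the sinusoidal-basis sum, applies the bound from Lemma~\ref{lem:sinusoidal_PE}, and uses nonsingularity of $H$ to get $HH^\rmT \succeq \sigma_{\min}^2(H) I_{2n}$, which yields persistent excitation with $\alpha = \alpha_s\sigma_{\min}^2(H)$ and the same $N$.
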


\begin{proof}
    See Appendix \ref{proof:multisine_rank}.
\end{proof}

\subsection{Persistent Excitation by Stochastic Inputs}

Proposition \ref{prop:multisine_rank} considers a deterministic multisine input with a finite number of distinct frequencies.
The following result generalizes this result to the stochastic-input setting by considering a white-noise input whose power spectral density is positive over the entire frequency interval.
In this case, persistent excitation is characterized in expectation through the positive definiteness of the regressor covariance matrix.

\begin{proposition}
\label{prop:whitenoise_rank}
Consider the system \eqref{eq:system} with regressor $\phi_k\in\BBR^{2n}$ defined by \eqref{eq:regression}.
Let $\{u_k\}$ be a zero-mean white-noise sequence satisfying
$$
\mathbb E[u_k^2]
    =
    \sigma_u^2
    >
    0.
$$
Assume that the corresponding steady-state regressor covariance satisfies
$$
R_\phi
    \isdef
    \mathbb E
    \left[
        \phi_k\phi_k^\rmT
    \right]
    >
    0.
$$
Then, the regressor sequence $\{\phi_k\}$ is persistently exciting in expectation.
In particular, there exist an integer $N>0$ and a scalar $\alpha>0$ such that
\begin{align}
    \mathbb E
    \left[
        \sum_{i=k}^{k+N-1}
            \phi_i\phi_i^\rmT
    \right]
        &\geq
        \alpha I_{2n}
\end{align}
for all $k.$
\end{proposition}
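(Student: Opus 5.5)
The plan is to use linearity of expectation to turn the expected windowed sum into a sum of regressor covariances, then show each term is at least a fixed positive multiple of the identity. By linearity,
\begin{align}
    \mathbb E
    \left[
        \sum_{i=k}^{k+N-1}
            \phi_i\phi_i^\rmT
    \right]
        =
        \sum_{i=k}^{k+N-1}
        \mathbb E
        \left[
            \phi_i\phi_i^\rmT
        \right].
\end{align}
So it suffices to bound each $\mathbb E[\phi_i\phi_i^\rmT]$ from below uniformly in $i$.

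In steady state, asymptotic stability of $G(q)$ and wide-sense stationarity of the white input make the joint process $\{(u_k,y_k)\}$ wide-sense stationary. Since $\phi_k$ is a fixed linear arrangement of finitely many lagged values of $u$ and $y$, its second moment $\mathbb E[\phi_k\phi_k^\rmT]$ does not depend on $k$ and equals $R_\phi$.

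Each summand is therefore $R_\phi$, and the expected sum equals $N R_\phi$. Because $R_\phi>0$ by assumption, $\lambda_{\min}(R_\phi)>0$, and $N R_\phi\geq N\lambda_{\min}(R_\phi) I_{2n}$. The conclusion then holds with any integer $N\geq1$, in particular $N=1$, and $\alpha=N\lambda_{\min}(R_\phi)$. This mirrors the structure of Proposition~\ref{prop:multisine_rank}, where positivity of a Gram-type matrix is transferred to the windowed sum. Here the expectation plays the role that window averaging over the sinusoidal basis $s_k$ played in Lemma~\ref{lem:sinusoidal_PE}.

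The main obstacle is the stationarity step: the statement refers to the \emph{steady-state} regressor, and the claim should hold "for all $k$" rather than only asymptotically. I would handle this in one of two ways.

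\begin{itemize}
    \item Interpret the process as initialized in its stationary distribution, equivalently the two-sided stationary solution $y_k=\sum_{\ell\ge1} g_\ell u_{k-\ell}$. This solution exists in mean square because the impulse response $\{g_\ell\}$ of the stable $G(q)$ is square summable. With this interpretation, $\mathbb E[\phi_k\phi_k^\rmT]=R_\phi$ exactly for all $k$.
    \item Otherwise, note that the transient contribution to $\mathbb E[\phi_k\phi_k^\rmT]$ decays geometrically, by stability. Hence there is a $k_0$ beyond which $\mathbb E[\phi_k\phi_k^\rmT]\geq\tfrac12 R_\phi$, which yields the bound with $\alpha=\tfrac N2\lambda_{\min}(R_\phi)$ for $k\ge k_0$.
\end{itemize}

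Finiteness of the entries of $R_\phi$ follows from $\sigma_u^2<\infty$ and square summability of $\{g_\ell\}$. This justifies interchanging expectation and the finite sum.
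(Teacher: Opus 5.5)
Your proposal is correct and follows essentially the same route as the paper. Linearity of expectation and steady-state stationarity give $\mathbb E\bigl[\sum_{i=k}^{k+N-1}\phi_i\phi_i^\rmT\bigr]=NR_\phi\geq N\lambda_{\min}(R_\phi)I_{2n}$, and your discussion of stationary initialization versus decaying transients only makes explicit the steady-state assumption that the paper invokes without comment.
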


\begin{proof}
    See Appendix \ref{proof:whitenoise_rank}
\end{proof}

Proposition \ref{prop:whitenoise_rank} establishes that white-noise excitation generates a persistently exciting regressor in expectation.
In identification and adaptive-control applications, the plant input generally contains a nominal component that is selected to achieve the desired system behavior.
An excitation signal can then be added to the nominal input to provide the additional excitation required for parameter convergence.
The following result shows that adding a white-noise excitation signal preserves persistent excitation in expectation independently of the nominal deterministic input.

Consider the plant input
\begin{align}
    u_k
        &=
        \bar u_k
        +
        \sigma_k w_k,
    \label{eq:nominal_plus_white_noise}
\end{align}
where $\{\bar u_k\}$ is a deterministic nominal input, $\sigma_k \geq 0$ is the excitation amplitude at time $k$, and $\{w_k\}$ is a zero-mean unit-variance white-noise sequence satisfying
\begin{align}
    \mathbb E[w_k]
        &=
            0,
    \\
    \mathbb E[w_k^2]
        &=
            1.
\end{align}
The excitation amplitude $\sigma_k$ therefore determines the variance of the excitation component at time $k$, which is $\sigma_k^2$.

\begin{proposition}
\label{prop:constant_sigma}
Consider the system \eqref{eq:system} with input \eqref{eq:nominal_plus_white_noise} and $\sigma_k\equiv\sigma>0$.
Let $\phi_k^w$ be the steady-state regressor generated by $w_k$, and assume $R_w\isdef\mathbb E[\phi_k^w(\phi_k^w)^\rmT]\succ0$.
Define $\SF_k^w\isdef\sum_{i=0}^{k-1}\sigma^2\phi_i^w(\phi_i^w)^\rmT$ and $\bar P_k\isdef\left(P_0^{-1}+\mathbb E[\SF_k^w]\right)^{-1}$.
Then the following hold.
\begin{enumerate}
    \item[(a)] $\mathbb E[\phi_k\phi_k^\rmT]\succeq\sigma^2R_w$ for all $k$.
    Consequently, for every integer $N>0$,
    \begin{align}
        \mathbb E\left[\sum_{i=k}^{k+N-1}\phi_i\phi_i^\rmT\right]
            &\succeq
            N\sigma^2\lambda_{\min}(R_w)I_{2n},
        \nn
    \end{align}
    so $\{\phi_k\}$ is persistently exciting in expectation.
    \item[(b)] $\mathbb E[\SF_k^w]=k\sigma^2R_w$ and
    \begin{align}
        \lambda_{\max}(\bar P_k)
            &\leq
            \frac{1}{\lambda_{\min}(P_0^{-1})+k\sigma^2\lambda_{\min}(R_w)}
            \nn \\
            &=
            \SO\left((k\sigma^2)^{-1}\right).
        \label{eq:P_decay_rate}
    \end{align}
\end{enumerate}
\end{proposition}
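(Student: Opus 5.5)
The plan rests on linearity of $G(q)$. By superposition, the steady-state regressor splits as $\phi_k=\bar\phi_k+\sigma\phi_k^w$. Here $\bar\phi_k$ is the deterministic regressor generated by $\bar u_k$, and $\phi_k^w$ is the zero-mean stochastic regressor generated by $w_k$. Since the system is LTI and the regressor entries are delayed inputs and outputs, both pieces are linear images of their respective inputs. Because $\{\bar u_k\}$ is deterministic, $\bar\phi_k$ is deterministic. Because $\{w_k\}$ is zero mean, each entry of $\phi_k^w$, being a linear combination (through the stable impulse response) of past $w_i$, has zero mean.

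For part (a), I will expand
\begin{align}
\mathbb E[\phi_k\phi_k^\rmT]
=\bar\phi_k\bar\phi_k^\rmT+\sigma\bar\phi_k\mathbb E[\phi_k^w]^\rmT+\sigma\mathbb E[\phi_k^w]\bar\phi_k^\rmT+\sigma^2\mathbb E[\phi_k^w(\phi_k^w)^\rmT]. \nn
\end{align}
The cross terms vanish because $\mathbb E[\phi_k^w]=0$. The first term is positive semidefinite, and the last equals $\sigma^2R_w$ in steady state. This gives $\mathbb E[\phi_k\phi_k^\rmT]\succeq\sigma^2R_w$. Summing over $i=k,\dots,k+N-1$ and using $R_w\succeq\lambda_{\min}(R_w)I_{2n}$ yields the stated bound. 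Since $\lambda_{\min}(R_w)>0$ by the assumption $R_w\succ0$, this is persistent excitation in expectation in the sense of Proposition~\ref{prop:whitenoise_rank}, with any $N$ and $\alpha=N\sigma^2\lambda_{\min}(R_w)$.

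For part (b), linearity of expectation gives $\mathbb E[\SF_k^w]=\sum_{i=0}^{k-1}\sigma^2R_w=k\sigma^2R_w$, using stationarity of $\phi_i^w$. Then $\bar P_k^{-1}=P_0^{-1}+k\sigma^2R_w$. Weyl's inequality, $\lambda_{\min}(A+B)\ge\lambda_{\min}(A)+\lambda_{\min}(B)$ for symmetric $A,B$, gives $\lambda_{\min}(\bar P_k^{-1})\ge\lambda_{\min}(P_0^{-1})+k\sigma^2\lambda_{\min}(R_w)$. Inverting via $\lambda_{\max}(\bar P_k)=1/\lambda_{\min}(\bar P_k^{-1})$ yields \eqref{eq:P_decay_rate}. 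The $\SO((k\sigma^2)^{-1})$ statement follows by dropping the positive constant $\lambda_{\min}(P_0^{-1})$ from the denominator.

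The main obstacle is the steady-state interpretation. Strictly, $\mathbb E[\phi_i^w(\phi_i^w)^\rmT]$ equals $R_w$ only after the transient has died out, and early samples differ. I will handle this as the paper does, by taking all regressors to be the steady-state ones, justified by asymptotic stability of $G(q)$ and the standing convention in Propositions~\ref{prop:multisine_rank} and \ref{prop:whitenoise_rank}. If a transient were kept, the transient part of $\phi_i^w$ decays geometrically, so the claims would hold up to an additive term that vanishes as $k$ grows. A second point to state explicitly is that $\bar u$ and $w$ enter additively through the same LTI map, so the cross-term cancellation does not need independence assumptions beyond $\mathbb E[w_k]=0$.
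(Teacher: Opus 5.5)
Your proposal is correct and matches the paper's proof essentially step for step: you use the superposition $\phi_k=\bar\phi_k+\sigma\phi_k^w$ with the cross terms vanishing since $\mathbb E[\phi_k^w]=0$ for (a), and stationarity, Weyl's inequality, and $\lambda_{\max}(\bar P_k)=1/\lambda_{\min}(\bar P_k^{-1})$ for (b). Your explicit note that the bound depends on the steady-state convention, so early transient samples are excluded, is a sound clarification that the paper leaves implicit.
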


\begin{proof}
    See Appendix \ref{proof:constant_sigma}.
\end{proof}

Proposition \ref{prop:constant_sigma} Part (a) shows that any constant nonzero amplitude produces persistent excitation in expectation.
Proposition \ref{prop:constant_sigma} Part (b) shows that the amplitude sets the decay rate of $\bar P_k$.
Increasing $\sigma$ accelerates the decay of $\bar P_k$ and therefore the convergence of $\hat\theta_k$ to $\theta$.
However, the increase raises the variance of the excitation added to the plant input, which degrades tracking and increases control effort.
%
% A regulation law for $\sigma_k$ is therefore governed by this tradeoff, and not by the instantaneous prediction error $z_k$ alone, per \eqref{eq:prediction_not_parameter_convergence}.
A regulation law for $\sigma_k$ must balance these two effects.
Section~\ref{sec:problem_statement} formalizes this balance as a design problem for $\sigma_k$.

\section{Excitation Regulation Problem}
\label{sec:problem_statement}

Consider the RLS estimator \eqref{eq:rls_theta}--\eqref{eq:rls_P} applied to the system \eqref{eq:system}, with plant input \eqref{eq:nominal_plus_white_noise}.
A constant amplitude $\sigma_k\equiv\sigma>0$ gives persistent excitation in expectation by Proposition~\ref{prop:constant_sigma}.
This excitation is never removed.
It continues to perturb the system after the parameter estimate has converged, which is undesirable when $\hat\theta_k$ is used online for control design.
An alternative is to let $\sigma_k$ depend on a measure of current identification performance, so that excitation is reduced once the estimate is accurate.
The one-step prediction error does not serve this purpose alone, since $z_k\rightarrow0$ does not imply $\hat\theta_k\rightarrow\theta$ by \eqref{eq:prediction_not_parameter_convergence}.

\begin{problem}
\label{prob:main}
Given the system \eqref{eq:system}, a nominal input $\bar u_k$ that is not persistently exciting, and the RLS estimator \eqref{eq:rls_theta}--\eqref{eq:rls_P} with input \eqref{eq:nominal_plus_white_noise}, determine $\sigma_k$ as a function of quantities available online at time $k$ such that, almost surely,
\begin{align}
    \text{(i) } \sigma_k \rightarrow 0,
    \qquad
    \text{(ii) } \hat\theta_k \rightarrow \theta.
\end{align}
\end{problem}

Objectives (i) and (ii) compete.
Reducing $\sigma_k$ too quickly relative to the remaining uncertainty in $\hat\theta_k$ risks violating (ii), while holding $\sigma_k$ at a positive constant violates (i).
Section~\ref{sec:self_regulating_persistency} constructs an excitation law based on the RLS covariance matrix $P_k$, which is available online, and gives conditions under which (i) and (ii) both hold.

\section{Self-Regulating Excitation}
\label{sec:self_regulating_persistency}

As stated in Problem~\ref{prob:main}, the objective is to drive the excitation amplitude to zero while the parameter estimate converges to the true parameter.
This requires an amplitude that decays no faster than the remaining parameter uncertainty is resolved.
The covariance norm $p_k=\lambda_{\max}(P_k)$ is computed by the estimator at each step and is nonincreasing, so it serves as an online measure of that uncertainty.
Let $\SH_k$ denote the information available from $\{u_i,y_i\}_{i=0}^{k}$, so that $P_k$, and hence $p_k$, is computable from $\SH_k$.
Select the excitation amplitude as
\begin{align}
    \sigma_k
        &=
        \sigma_0
        \left(
            \frac{p_k}{p_0}
        \right)^\beta,
    \label{eq:self_regulating_sigma}
\end{align}
where $\sigma_0>0$, $p_0=\lambda_{\max}(P_0)$, and $0<\beta<1$.
The amplitude $\sigma_k$ is computable from $\SH_k$.
The regressor $\phi_k$ is constructed from $u_{k-1},\ldots,u_{k-n}$ and $y_{k-1},\ldots,y_{k-n}$, so it is computable from $\SH_{k-1}$ and does not depend on $w_k$ or $\sigma_k$.
The following proposition shows that the amplitude in \eqref{eq:self_regulating_sigma} does not remove the information gain.
The expected information accumulated over a window of $N$ samples is bounded below in proportion to $\sigma_{k-1}^2$.

\begin{proposition}
\label{prop:information_bound}
Consider the system \eqref{eq:system} with input \eqref{eq:nominal_plus_white_noise} and excitation amplitude \eqref{eq:self_regulating_sigma}, $0<\beta<1$.
Assume that $\{\bar u_k\}$ is bounded.
Assume that $\{w_k\}$ has finite support, that is, $|w_k|\leq\bar w$ for some $\bar w>0$, and that $w_k$ is independent of $\SH_{k-1}$.
Then $\|\phi_k\|^2\leq B$ for all $k$ almost surely, for some $B>0$.
Moreover, with $N=2n+1$, there exists $c>0$ such that
\begin{align}
    \BBE
    \left[
        \SF_{k+N}-\SF_k
        \mid
        \SH_{k-1}
    \right]
        &\succeq
        c\,\sigma_{k-1}^2 I_{2n}
    \label{eq:self_regulating_information_bound}
\end{align}
for all $k\geq1$.
\end{proposition}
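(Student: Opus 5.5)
The proof has two parts: a uniform bound on the regressor, and a lower bound on the expected information gained over a window of $N=2n+1$ samples.

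\textbf{Boundedness of the regressor.}
Since $p_k=\lambda_{\max}(P_k)$ is nonincreasing by \eqref{eq:rls_P}, the law \eqref{eq:self_regulating_sigma} gives $0<\sigma_k\le\sigma_0$ for all $k$. Hence the input \eqref{eq:nominal_plus_white_noise} satisfies $|u_k|\le \sup_i|\bar u_i|+\sigma_0\bar w$ almost surely. Because $G(q)$ is asymptotically stable, its impulse response is absolutely summable, so $y_k$ is uniformly bounded given bounded initial conditions. The $2n$ entries of $\phi_k$ are delayed inputs and outputs, which yields $\|\phi_k\|^2\le B$ almost surely.

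\textbf{Information bound: excitation route.}
Over the window $i=k,\dots,k+N-1$, decompose the input as $u_i=\bar u_i+\sigma_i w_i$. By linearity, $\phi_i=\phi_i^{d}+\phi_i^{w}$, where $\phi_i^{d}$ is the part measurable with respect to $\SH_{k-1}$ (initial conditions up to $k-1$ plus the deterministic input). The term $\phi_i^{w}$ collects the contributions of $\sigma_jw_j$ for $k\le j\le i-1$, and it enters both directly through delayed $u$'s and through delayed $y$'s via the Markov parameters of $G$.

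The key observation concerns the cross terms. Taking conditional expectation given $\SH_{k-1}$, they do not vanish trivially, because $\sigma_j$ depends on $w_{k},\dots,w_{j-1}$. However, $\sigma_j$ is $\SH_j$-measurable and $w_j$ is independent of $\SH_{j-1}$. I would therefore expand $\phi_i$ sequentially, writing $\phi_{i}=\phi_i^{(i-1)}+e\,\sigma_{i-1}w_{i-1}+\dots$. Here the $u_{i-1}$ slot contributes $\sigma_{i-1}w_{i-1}$ in the coordinate $e_{n+1}$, and the $y$ slots contribute $-b_1\sigma_{i-1}w_{i-1}$, and so on.

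A cleaner route is the following. For any unit vector $v$,
\[
\BBE[v^\rmT(\SF_{k+N}-\SF_k)v\mid\SH_{k-1}]=\sum_i\BBE[(v^\rmT\phi_i)^2\mid\SH_{k-1}].
\]
Write $v^\rmT\phi_i=X_i+\sigma_{i-1}w_{i-1}g_i(v)$, where $X_i$ is $\SH_{i-2}$-measurable and $g_i(v)$ is a deterministic coefficient depending on $v$ and $\theta$. Conditioning on $\SH_{i-2}$ and using the independence of $w_{i-1}$, this gives
\[
\BBE[(v^\rmT\phi_i)^2\mid\SH_{i-2}]\ge\sigma_{i-1}^2\,\BBE[w_{i-1}^2]\,g_i(v)^2.
\]
One subtlety must be checked here: the coefficient of $w_{i-1}$ in $v^\rmT\phi_i$ must not itself depend on $w_{i-1}$. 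It does not, because $\phi_i$ is affine in $u_{i-1}$ with $\SH_{i-2}$-measurable coefficients.

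The difficulty with this route is that $g_i(v)$ only picks out the newest-input direction, roughly $v_{n+1}$, so it alone does not give definiteness in all directions. To reach every direction, I would condition further back: conditioning on $\SH_{k-1}$ and expanding $v^\rmT\phi_i$ in all the innovations $w_k,\dots,w_{i-1}$ yields a martingale-difference decomposition, and the conditional variance then sums the squared coefficients.

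\textbf{Comparing amplitudes and establishing definiteness.}
These coefficients carry amplitudes $\sigma_j\ge\sigma_{k-1}(p_{k+N}/p_{k-1})^{\beta}$. To bound them below by $\sigma_{k-1}$, I need a ratio bound on $p$ over a window of length $N$. By \eqref{eq:rls_P} and boundedness of $\phi$,
\[
P_{j+1}^{-1}=P_j^{-1}+\phi_j\phi_j^\rmT\preceq P_j^{-1}(1+Bp_j),
\]
and I would try to show $p_{j+1}\ge p_j/(1+Bp_0)$ from this. Strictly, $p_{j+1}\ge \lambda_{\max}(P_j)/(1+\phi_j^\rmT P_j\phi_j)$ along the top eigenvector, which gives $\sigma_j\ge\sigma_{k-1}(1+Bp_0)^{-\beta N}$.

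The main obstacle is showing that the resulting quadratic form $\sum_i\sum_j\sigma_j^2 g_{ij}(v)^2$ is uniformly positive over unit $v$. This is a deterministic rank condition: the $2n\times N$ matrix of Markov-parameter coefficients mapping innovations to regressors, a Toeplitz/Sylvester-type matrix built from the impulse response, must have full row rank. This should hold when the numerator and denominator of $G$ are coprime, with $N=2n+1$ innovations sufficing. Compactness of the unit sphere then gives $c>0$.
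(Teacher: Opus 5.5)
Your proposal is correct and follows essentially the same route as the paper. You bound $\sigma_l$ below by a fixed multiple of $\sigma_{k-1}$ across the window using a covariance ratio bound; your per-step iteration gives the factor $(1+Bp_0)^{-\beta N}$, where the paper gets $(1+NBp_0)^{-\beta}$ in one shot. You then expand $v^\rmT\phi_i$ in the innovations $\sigma_l w_l$ conditioned on $\SH_{k-1}$, so that the cross terms vanish, and conclude from the Toeplitz/Sylvester full-rank condition for coprime $G(q)$ together with compactness of the unit sphere.

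Two points to tighten. First, the cross terms vanish because $\sigma_l$ is $\SH_{l-1}$-measurable, not merely $\SH_l$-measurable as you wrote; this holds since $P_l$ depends only on $\phi_0,\ldots,\phi_{l-1}$. Second, exactly as in the paper, the rank step needs $G(q)$ to be minimal (coprime numerator and denominator), which is not among the proposition's stated hypotheses.
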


\begin{proof}
    See Appendix \ref{proof:information_bound}.
\end{proof}

Note that the bound in Proposition~\ref{prop:information_bound} is derived from the plant and the input, and is not assumed.
This result is used to prove the main result of this paper.

\begin{theorem}
\label{thm:self_regulating}
Under the hypotheses of Proposition~\ref{prop:information_bound}, almost surely,
\begin{align}
    \sigma_k
        &\rightarrow
        0,
    \\
    p_k
        &\rightarrow
        0,
    \\
    \lambda_{\min}(\SF_k)
        &\rightarrow
        \infty,
    \\
    \hat\theta_k
        &\rightarrow
        \theta,
    \\
    \frac{\sigma_k}{p_k}
        &\rightarrow
        \infty.
\end{align}
\end{theorem}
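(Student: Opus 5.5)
The plan is to work with the monotone sequence $p_k=\lambda_{\max}(P_k)$ and its limit. By \eqref{eq:rls_P}, $P_{k+1}\preceq P_k$, so $p_k$ is nonincreasing and bounded below by $0$. Hence $p_k\to p_\infty\geq0$ on every sample path, and $\sigma_k\to\sigma_\infty=\sigma_0(p_\infty/p_0)^\beta$. The core step is to show $p_\infty=0$ almost surely. Everything else then follows:
\begin{itemize}
\item $\sigma_k\to0$ directly from \eqref{eq:self_regulating_sigma};
\item $\lambda_{\min}(\SF_k)\to\infty$, because \eqref{eq:P_F_relation} gives $1/p_k=\lambda_{\min}(P_0^{-1}+\SF_k)\leq\lambda_{\max}(P_0^{-1})+\lambda_{\min}(\SF_k)$;
\item $\sigma_k/p_k=\sigma_0 p_0^{-\beta}p_k^{\beta-1}\to\infty$, since $\beta<1$ and $p_k\to0$.
\end{itemize}
For $\hat\theta_k\to\theta$, I would use the standard RLS identity $\tilde\theta_{k}=\hat\theta_k-\theta=P_kP_0^{-1}\tilde\theta_0$. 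This holds because the model \eqref{eq:regression} is noise-free, so $P_{k+1}^{-1}\tilde\theta_{k+1}=P_k^{-1}\tilde\theta_k$. It gives $\|\tilde\theta_k\|\leq p_k\|P_0^{-1}\tilde\theta_0\|\to0$.

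To prove $p_\infty=0$ a.s., I would argue on the event $E=\{p_\infty>0\}$. On $E$ we have $\sigma_{k-1}\geq\sigma_\infty>0$ for all $k$. Summing \eqref{eq:self_regulating_information_bound} over disjoint windows $k_j=1+jN$ then gives
$$\sum_{j}\BBE[\SF_{k_j+N}-\SF_{k_j}\mid\SH_{k_j-1}]\succeq c\sum_j\sigma_{k_j-1}^2 I=\infty\cdot I$$
on $E$. The remaining task is to convert this conditional-expectation growth into pathwise growth of $\lambda_{\min}(\SF_k)$. Once that is done, $1/p_k\geq\lambda_{\min}(P_0^{-1}+\SF_k)\to\infty$, which contradicts $p_\infty>0$. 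So $\PP(E)=0$.

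For the conversion I would fix a unit vector $v$ and set $X_j=v^\rmT(\SF_{k_j+N}-\SF_{k_j})v$. Proposition~\ref{prop:information_bound} gives $0\leq X_j\leq NB$ almost surely. Note that $X_j$ is $\SH_{k_{j+1}-1}$-measurable, since $\phi_i$ depends only on $\SH_{i-1}$. A conditional Borel--Cantelli argument, or the martingale strong law for bounded increments $X_j-\BBE[X_j\mid\SH_{k_j-1}]$, then yields
$$\sum_jX_j=\infty\quad\text{a.s. on}\quad\Big\{\sum_j\BBE[X_j\mid\SH_{k_j-1}]=\infty\Big\}.$$
In fact it gives the ratio statement $\sum_{j<J}X_j\big/\sum_{j<J}\BBE[X_j\mid\cdot]\to1$. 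This handles one direction $v$. To get $\lambda_{\min}$ uniformly over the unit sphere, I would apply the argument to the finitely many matrix entries. The ratio form of the martingale law shows that $\SF_{k_J}/\sum_{j<J}\sigma^2_{k_j-1}$ eventually dominates $(c/2)I$ plus a term vanishing entrywise. Since the dimension is finite, an entrywise vanishing error is vanishing in operator norm. Working with the whole matrix martingale $M_J=\sum_{j<J}(\Delta_j-\BBE[\Delta_j\mid\cdot])$, whose increments are bounded by $2NB$, avoids a covering argument.

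I expect this stochastic step to be the main obstacle. The issue is that the normalizer $\sum_j\sigma_{k_j-1}^2$ is itself random and adapted. One needs the version of the martingale convergence theorem stating that, for square-integrable increments with conditional variance $\leq$ const$\cdot\BBE[\cdot\mid\cdot]$ (true here because $0\leq\Delta_j\preceq NBI$), $M_J/A_J\to0$ on $\{A_J\to\infty\}$, where $A_J$ is the predictable compensator. I would first try to cite this directly, then verify the variance domination $\BBE[X_j^2\mid\cdot]\leq NB\,\BBE[X_j\mid\cdot]$, which is what makes the ratio version applicable. Everything else is deterministic bookkeeping.
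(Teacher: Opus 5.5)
Your proposal is correct and follows the paper's proof: contradiction on $\{p_\infty>0\}$ using the window bound of Proposition~\ref{prop:information_bound}, a strong law for bounded martingale differences, then the same consequences and the identity $\hat\theta_k-\theta=P_kP_0^{-1}(\hat\theta_0-\theta)$. Your deterministic windows $k_j=1+jN$, justified by $\sigma_{k-1}\geq\sigma_\infty$, are slightly cleaner than the paper's path-dependent $K$. The step you flag as the main obstacle is not really one. On $E$ you have $\sum_{j<J}\sigma_{k_j-1}^2\geq J\sigma_\infty^2$, so the plain law $M_J/J\rightarrow0$ for the bounded-increment matrix martingale already gives $\lambda_{\min}(\SF_{k_J})\geq J\bigl(c\sigma_\infty^2-o(1)\bigr)\rightarrow\infty$, which is what the paper does. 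Rely on this linear bound rather than a compensator-normalized ratio theorem: the entrywise conditional variances are controlled by the total information, including the nominal-input contribution, not by $\sum_j\sigma_{k_j-1}^2$.
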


\begin{proof}
    See Appendix \ref{proof:self_regulating}.
\end{proof}

Theorem~\ref{thm:self_regulating} satisfies both objectives of Problem~\ref{prob:main} almost surely.
The ratio $\sigma_k/p_k\rightarrow\infty$ shows that the amplitude decays more slowly than $p_k$.

\begin{remark}
\label{rem:beta_effect}
Simulations over the tested range of $\beta$ indicate that $p_k=\SO(k^{-\gamma})$, where the rate $\gamma$ decreases monotonically as $\beta$ increases.
A larger $\beta$ gives a smaller $\sigma_k$ for the same $p_k$, so less excitation is injected and $p_k$ decays more slowly.
The exponent $\beta$ therefore trades the rate of covariance decay against the amount of excitation retained.
A rigorous characterization of $\gamma$ as a function of $\beta$ is left to future work.
\end{remark}

\section{Numerical Results}
\label{sec:numerical_results}

In this section, we estimate the parameters of a third-order system and verify the results of Section~\ref{sec:self_regulating_persistency}.
In particular, Example~\ref{ex:three_case} verifies Theorem~\ref{thm:self_regulating} by comparing parameter convergence and excitation behavior when the two-tone nominal input alone is not persistently exciting, under added constant excitation, and under the proposed self-regulating excitation.
Example~\ref{ex:beta_sweep} verifies the tradeoff described in Remark~\ref{rem:beta_effect} by sweeping the exponent $\beta$.

\begin{exmp}
\label{ex:three_case}
\textup{Effect of self-regulating excitation on persistent excitation and parameter convergence.}
Consider the third-order system
\begin{align}
    y_k
        &=
        \frac{0.5 q^2 - 0.2 q + 0.1}{q^3 - 0.8 q^2 - 0.03 q + 0.09}
        u_k,
\end{align}
with poles at $0.5$, $0.6$, and $-0.3$.
Define $\phi_k\isdef[-y_{k-1}\ -y_{k-2}\ -y_{k-3}\ u_{k-1}\ u_{k-2}\ u_{k-3}]^\rmT$, so that $y_k=\phi_k^\rmT\theta$ with $\theta=[-0.8\ -0.03\ 0.09\ 0.5\ -0.2\ 0.1]^\rmT$.
To apply RLS, let $P_0=10I_6$ and $\hat\theta_0=0$.
Consider the plant input defined in \eqref{eq:nominal_plus_white_noise}, with the nominal input
\begin{align}
\bar u_k
&=
\sin(0.3k)+\sin(0.75k+0.4),
\end{align}
which is a two-tone multisine input.
Since $\phi_k\in\BBR^6$ requires three distinct frequencies for $H$ in \eqref{eq:regressor_sinusoidal_mapping} to have full rank, the two-tone input $\bar u_k$ alone is not persistently exciting.

Three choices of $\sigma_k$ are considered: $\sigma_k=0$ (Case 1); $\sigma_k=0.5$, constant (Case 2); and $\sigma_k$ self-regulating per \eqref{eq:self_regulating_sigma} with $\sigma_0=0.5$, $\beta=0.8$ (Case 3).
Figure~\ref{fig:three_case} shows, for all three cases, the parameter error $\|\theta_k-\theta\|$, the covariance measure $\lambda_{\max}(P_k)$, the Fisher information measure $\lambda_{\min}(\mathcal F_k)$, and the excitation amplitude $\sigma_k$.

In Case 1, the parameter error does not converge to zero, $\lambda_{\max}(P_k)$ plateaus, and $\lambda_{\min}(\mathcal F_k)$ plateaus rather than growing without bound, confirming the lack of persistent excitation identified above.

In Cases 2 and 3, the injected noise restores persistent excitation: the parameter error and $\lambda_{\max}(P_k)$ converge to zero, and $\lambda_{\min}(\mathcal F_k)$ grows without bound, consistent with Theorem~\ref{thm:self_regulating}.
However, $\sigma_k$ remains constant in Case 2, whereas $\sigma_k$ vanishes in Case 3, confirming the remaining claim of Theorem~\ref{thm:self_regulating}.
\hfill$\Diamond$
\end{exmp}

\begin{figure}[h]
\centering
\includegraphics[width=\columnwidth]{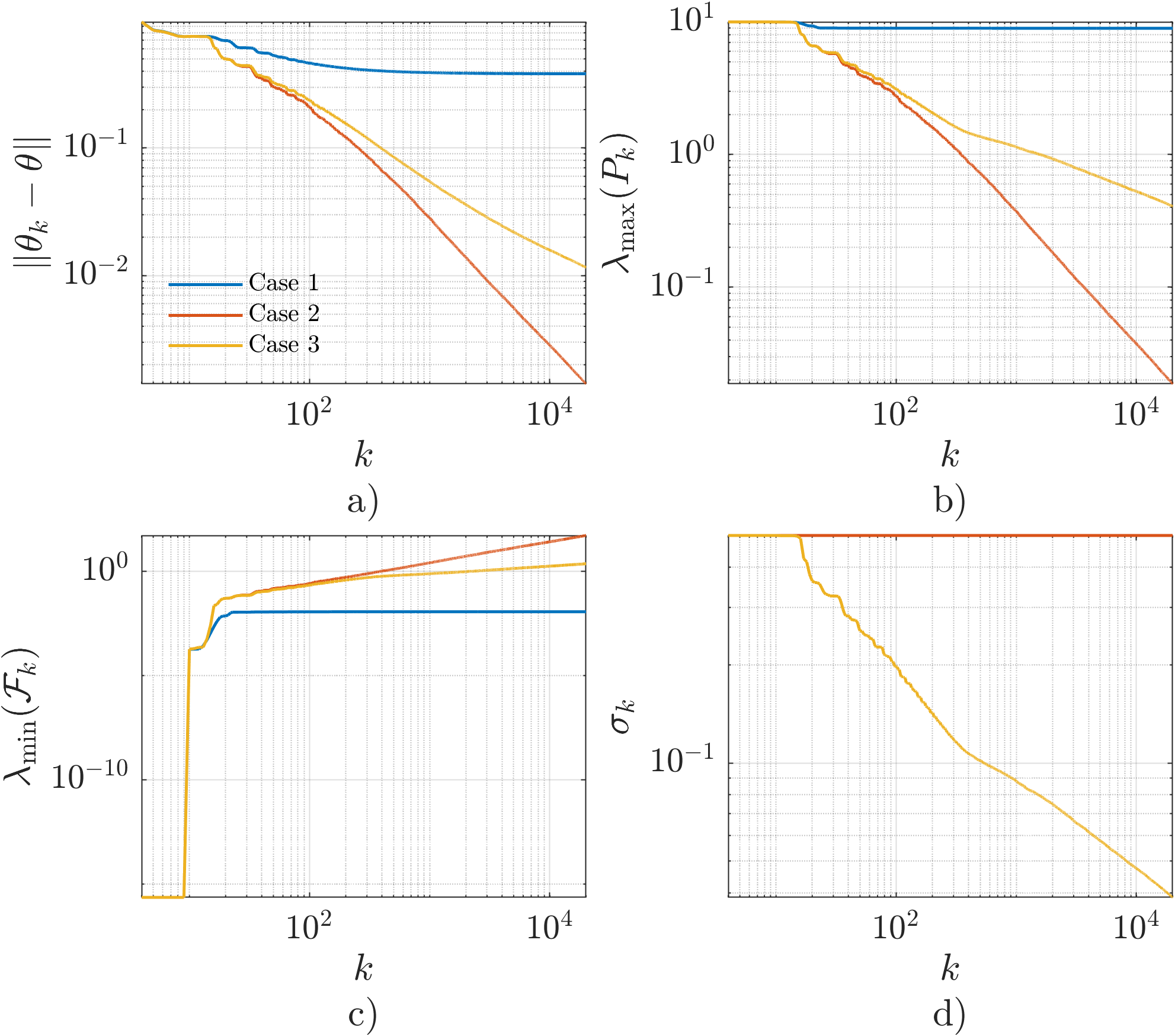}
\caption{Example~\ref{ex:three_case}. 
Effect of self-regulating excitation, confirming Theorem~\ref{thm:self_regulating}. 
a) shows the parameter error $\|\theta_k-\theta\|$,
b) shows the largest eigenvalue of the covariance matrix $\lambda_{\max}(P_k)$,
c) shows the smallest eigenvalue of the Fisher information matrix $\lambda_{\min}(\mathcal F_k)$, and 
d) shows the excitation amplitude $\sigma_k$. Case 1 (2 sines only) fails to converge and $\lambda_{\min}(\mathcal F_k)$ plateaus; Cases 2 and 3 both converge and $\lambda_{\min}(\mathcal F_k)$ grows without bound, but $\sigma_k$ vanishes only in Case 3.}
\label{fig:three_case}
\end{figure}
% \todo{figure titles need to be fixed}
\begin{exmp}
\label{ex:beta_sweep}
\textup{Effect of $\beta$ on the rate of covariance decay.}
Reconsider Example~\ref{ex:three_case} with $\sigma_k$ given by \eqref{eq:self_regulating_sigma} for
$$
\beta\in\{0,\,0.01,\,0.1,\,0.3,\,0.5,\,0.7,\,0.9,\,1.2,\,1.8\}.
$$
Figure~\ref{fig:beta_sweep} shows, for each value of $\beta$, $\lambda_{\max}(P_k)$, $\sigma_k$, and the ratio $\sigma_k/\lambda_{\max}(P_k)$, together with the exponent $\gamma$ obtained by fitting a line to $\log\lambda_{\max}(P_k)$ versus $\log k$ over the final $70\%$ of the run.
%
% For every $\beta$ in this sweep, $\sigma_k/\lambda_{\max}(P_k)\rightarrow\infty$, confirming the corresponding claim of Proposition~\ref{prop:self_regulating} across the full range of tested values, including $\beta=1.2$ and $\beta=1.8$, which fall outside the hypothesis $0<\beta<1$ of Proposition~\ref{prop:self_regulating}.
For $\beta<1$ in this sweep, $\sigma_k/\lambda_{\max}(P_k)\rightarrow\infty$, confirming the claim of Theorem~\ref{thm:self_regulating}.
For $\beta=1.2$ and $\beta=1.8$, which fall outside the hypothesis $0<\beta<1$ of Theorem~\ref{thm:self_regulating}, $\sigma_k/\lambda_{\max}(P_k)\rightarrow0$ instead, since $\sigma_k/p_k=(\sigma_0/p_0^\beta)p_k^{\beta-1}$ and $\beta-1>0$ reverses the sign of the limiting exponent as $p_k\rightarrow0$.
%
% Thus the hypothesis $\beta<1$ in Proposition~\ref{prop:self_regulating} is not a proof convenience: for $\beta>1$, the excitation amplitude decays faster than the covariance measure, the opposite of the behavior the proposition guarantees.
%
% The fitted rate $\gamma$ decreases monotonically as $\beta$ increases, confirming Remark~\ref{rem:beta_effect}: smaller $\beta$ yields faster covariance decay at the cost of more persistent excitation, larger $\beta$ yields the reverse, and no intermediate value of $\beta$ improves both simultaneously.
The fitted rate $\gamma$ decreases monotonically as $\beta$ increases, confirming Remark~\ref{rem:beta_effect}.
Smaller $\beta$ yields faster covariance decay at the cost of more persistent excitation, larger $\beta$ yields the reverse, and no intermediate value of $\beta$ improves both simultaneously.
\hfill$\Diamond$
\end{exmp}

\begin{figure}[t]
\centering
\includegraphics[width=\columnwidth]{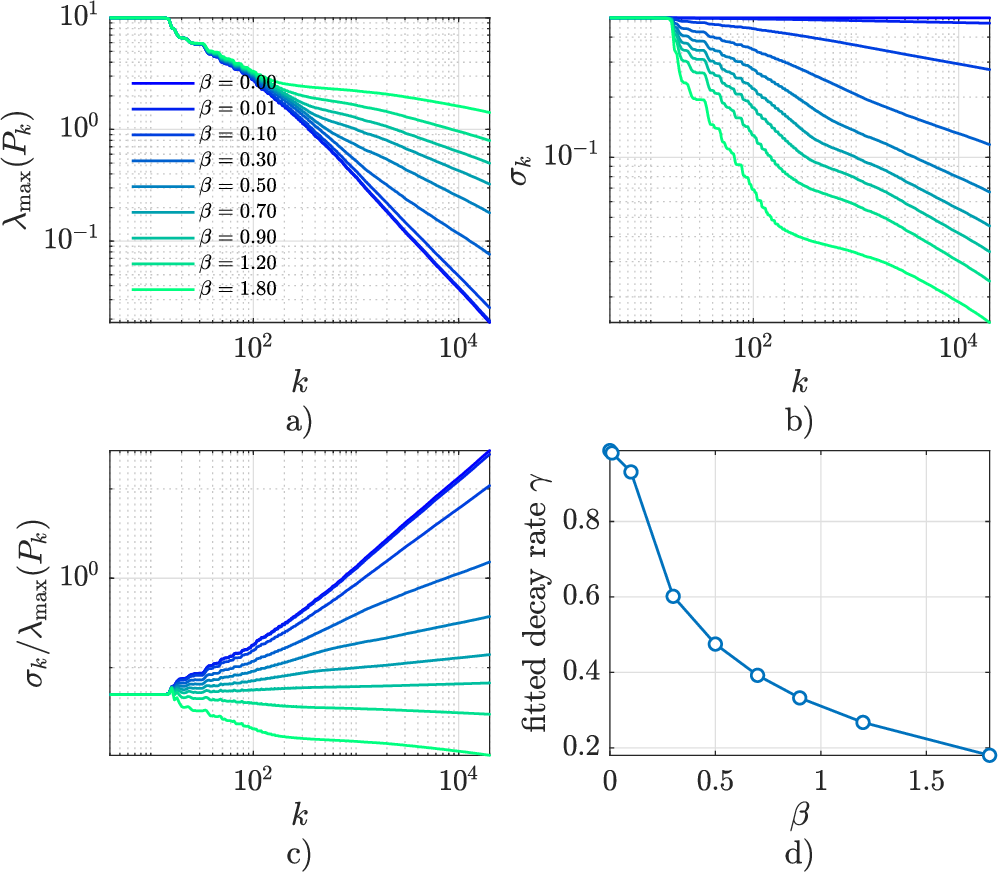}
\caption{Example~\ref{ex:beta_sweep}. Effect of $\beta$ on covariance decay, excitation amplitude, and their ratio. 
a) shows $\lambda_{\max}(P_k)$,
b) shows $\sigma_k$, 
c) shows $\sigma_k/\lambda_{\max}(P_k)$, and 
d) shows fitted decay rate $\gamma$ for various values of $\beta.$}
\label{fig:beta_sweep}
\end{figure}

% \section{Future Work}
% \begin{itemize}
%     \item Extension to vector-valued inputs and more general state-space models, which leads naturally to matrix RLS formulations.
%     %
%     \item Investigation of alternative self-regulating update laws for the excitation amplitude $\sigma_k$, including nonlinear and direction-dependent constructions.
%     %
%     \item Analysis of the effect of forgetting factors on the interaction between information loss, excitation-signal decay, and parameter convergence.
% \end{itemize}

\section{Conclusions}
\label{sec:conclusions}

This paper proposed a self-regulating excitation law that ties the probing amplitude to the recursive least-squares covariance matrix rather than to the prediction error.
%
% The excitation amplitude is persistently exciting in expectation for every nonzero value, and, under a mild information-accumulation condition, converges to zero while the parameter estimate converges to the true parameter, with the excitation vanishing more slowly than the covariance measure.
The excitation amplitude is persistently exciting in expectation for every nonzero value, and converges to zero while the parameter estimate converges to the true parameter, with the excitation vanishing more slowly than the covariance measure.
Numerical results confirmed both properties and showed that the law's exponent trades the rate of covariance decay against the amount of excitation retained.

Future work will analyze the effect of forgetting factors on the interaction between information loss, excitation-signal decay, and parameter convergence.

\appendix
\section{Proofs}
\subsection{Proof of Proposition \ref{prop:multisine_rank}}
\label{proof:multisine_rank}

\begin{proof}
Using \eqref{eq:regressor_sinusoidal_mapping},
\begin{align}
    \sum_{i=k}^{k+N-1}
        \phi_i\phi_i^\rmT
        &=
        H
        \left(
            \sum_{i=k}^{k+N-1}
                s_i s_i^\rmT
        \right)
        H^\rmT.
        \nn
\end{align}
By Lemma \ref{lem:sinusoidal_PE}, there exist an integer $N>0$ and a scalar
$\alpha_s>0$ such that
\begin{align}
    \sum_{i=k}^{k+N-1}
        s_i s_i^\rmT
        &\geq
        \alpha_s I_{2n}
        \nn
\end{align}
for all $k$.
Therefore,
\begin{align}
    \sum_{i=k}^{k+N-1}
        \phi_i\phi_i^\rmT
        &\geq
        \alpha_s H H^\rmT.
        \nn
\end{align}
Since $\rank H=2n$, the matrix $H$ is nonsingular, and thus
$$
H H^\rmT
    \geq
    \sigma_{\min}^2(H) I_{2n},
$$
where $\sigma_{\min}(H)>0$ is the minimum singular value of $H$.
Consequently,
\begin{align}
    \sum_{i=k}^{k+N-1}
        \phi_i\phi_i^\rmT
        &\geq
        \alpha_s
        \sigma_{\min}^2(H)
        I_{2n}.
        \nn
\end{align}
Thus, $\{\phi_k\}$ is persistently exciting.
\end{proof}

\subsection{Proof of Proposition \ref{prop:whitenoise_rank}}
\label{proof:whitenoise_rank}

\begin{proof}
In steady state,
\begin{align}
    \mathbb E
    \left[
        \sum_{i=k}^{k+N-1}
            \phi_i\phi_i^\rmT
    \right]
        &=
        \sum_{i=k}^{k+N-1}
            \mathbb E
            \left[
                \phi_i\phi_i^\rmT
            \right]
        =
        N R_\phi.
        \nn
\end{align}
Since $
R_\phi
    \geq
    \lambda_{\min}(R_\phi)I_{2n},
$
it follows that 
\begin{align}
    \mathbb E
    \left[
        \sum_{i=k}^{k+N-1}
            \phi_i\phi_i^\rmT
    \right]
        &\geq
        N\lambda_{\min}(R_\phi)I_{2n}
    \nn.
\end{align}
Thus, $\{\phi_k\}$ is persistently exciting in expectation.
\end{proof}

\subsection{Proof of Proposition \ref{prop:constant_sigma}}
\label{proof:constant_sigma}

\begin{proof}
By linearity of $G(q)$, $\phi_k=\bar\phi_k+\sigma\phi_k^w$, where $\bar\phi_k$ is generated by $\bar u_k$ and is deterministic.
Since $\phi_k^w$ is zero mean,
\begin{align}
    \mathbb E[\phi_k\phi_k^\rmT]
        &=
        \bar\phi_k\bar\phi_k^\rmT
        +
        \sigma^2R_w
        \succeq
        \sigma^2R_w.
    \nn
\end{align}
Summing over $i=k,\ldots,k+N-1$ and using $R_w\succeq\lambda_{\min}(R_w)I_{2n}$ gives (a).

By definition of $\SF_k^w$ and stationarity of $\phi_k^w$, $\mathbb E[\SF_k^w]=k\sigma^2R_w$.
Since $\lambda_{\min}(A+B)\geq\lambda_{\min}(A)+\lambda_{\min}(B)$ for symmetric $A$ and $B$,
\begin{align}
    \lambda_{\max}(\bar P_k)
        &=
        \frac{1}{\lambda_{\min}\left(P_0^{-1}+k\sigma^2R_w\right)}
        \nn \\
        &
        \leq
        \frac{1}{\lambda_{\min}(P_0^{-1})+k\sigma^2\lambda_{\min}(R_w)},
    \nn
\end{align}
which gives (b).
\end{proof}

\subsection{Proof of Proposition \ref{prop:information_bound}}
\label{proof:information_bound}

\begin{proof}
Since $p_k$ is nonincreasing, $\sigma_k\leq\sigma_0$.
Stability of $G(q)$ and boundedness of $\bar u_k$, $\sigma_k$, and $w_k$ give $\|\phi_k\|^2\leq B$.

Fix $k\geq1$ and $k\leq l\leq k+N-1$.
Then $P_l^{-1}\preceq P_{k-1}^{-1}+NB\,I_{2n}\preceq(1+NBp_0)P_{k-1}^{-1}$, so $p_l\geq p_{k-1}/(1+NBp_0)$ and, by \eqref{eq:self_regulating_sigma},
\begin{align}
    \sigma_l
        &\geq
        \rho\,\sigma_{k-1},
    \qquad
    \rho
        \isdef
        (1+NBp_0)^{-\beta}.
    \nn
\end{align}

For a unit vector $v$ and $k\leq i\leq k+N-1$, linearity of $G(q)$ gives
\begin{align}
    v^\rmT\phi_i
        &=
        m_i
        +
        \sum_{l=k}^{i-1}
            c_{i-l}(v)\,\sigma_l w_l,
    \nn
\end{align}
where $m_i$ is computable from $\SH_{k-1}$ and $c_d(v)$ are fixed coefficients determined by $v$ and the impulse response of $G(q)$.
Since $\sigma_l$ is computable from $\SH_{l-1}$ and $w_l$ is independent of $\SH_{l-1}$ with zero mean and unit variance, the cross terms vanish under $\BBE[\cdot\mid\SH_{k-1}]$, and
\begin{align}
    v^\rmT
    \BBE\left[\SF_{k+N}-\SF_k\mid\SH_{k-1}\right]
    v
        &\geq
        \rho^2\sigma_{k-1}^2\sum_{d=1}^{N-1}(N-d)\,c_d(v)^2.
    \nn
\end{align}

For minimal $G(q)$ and $N=2n+1$, no unit $v$ has $c_1(v)=\cdots=c_{2n}(v)=0$, since this would give a pole-zero cancellation between the $v$-weighted regressor channels and $G(q)$.
The sum is continuous in $v$, so its minimum $r$ over the unit sphere is positive.
Taking $c=\rho^2r$ completes the proof.
\end{proof}

\subsection{Proof of Theorem \ref{thm:self_regulating}}
\label{proof:self_regulating}

\begin{proof}
Since $P_k^{-1}=P_0^{-1}+\SF_k$ and $\SF_k$ is nondecreasing in the Loewner order, $p_k$ is nonincreasing and converges to some $p_\infty\geq0$ on every sample path.

Consider a sample path with $p_\infty>0$.
Then $\sigma_k\rightarrow\bar\sigma\isdef\sigma_0(p_\infty/p_0)^\beta>0$, so there exists $K\geq1$ such that $\sigma_{k-1}^2\geq\bar\sigma^2/2$ for all $k\geq K$.
Define $t_j\isdef K+jN$, $\SG_j\isdef\SH_{t_j-1}$, and $\Delta_j\isdef\SF_{t_{j+1}}-\SF_{t_j}$.
By Proposition~\ref{prop:information_bound} with $k=t_j$, $\BBE[\Delta_j\mid\SG_j]\succeq(c\bar\sigma^2/2)I_{2n}$.
Since $\|\phi_i\|^2\leq B$, $\|\Delta_j\|\leq NB$.
Hence $D_j\isdef\Delta_j-\BBE[\Delta_j\mid\SG_j]$ is a bounded martingale difference sequence, and the strong law of large numbers for martingales gives $S_n/n\rightarrow0$ almost surely, where $S_n\isdef\sum_{j=0}^{n-1}D_j$.
Then
\begin{align}
    \lambda_{\min}\left(\frac{\SF_{t_n}}{n}\right)
        &\geq
        \frac{c\bar\sigma^2}{2}
        -
        \left\|\frac{S_n}{n}\right\|
        -
        \frac{\|\SF_{t_0}\|}{n},
    \nn
\end{align}
so $\liminf_n\lambda_{\min}(\SF_{t_n})/n\geq c\bar\sigma^2/2>0$.
Since $\lambda_{\min}(\SF_k)$ is nondecreasing, $\lambda_{\min}(\SF_k)\rightarrow\infty$ almost surely on this event.
This gives $p_k\leq\left(\lambda_{\min}(P_0^{-1})+\lambda_{\min}(\SF_k)\right)^{-1}\rightarrow0$, which contradicts $p_\infty>0$.
Therefore $p_\infty=0$ almost surely.

It follows that $\sigma_k\rightarrow0$ and $p_k\rightarrow0$ almost surely.
Also $\lambda_{\min}(\SF_k)\geq p_k^{-1}-\lambda_{\max}(P_0^{-1})\rightarrow\infty$, and
\begin{align}
    \frac{\sigma_k}{p_k}
        &=
        \frac{\sigma_0}{p_0^\beta}\,p_k^{\beta-1}
        \rightarrow
        \infty
    \nn
\end{align}
since $0<\beta<1$.

Finally, \eqref{eq:rls_theta}--\eqref{eq:rls_P} with $y_k=\phi_k^\rmT\theta$ give $\hat\theta_k-\theta=P_kP_0^{-1}(\hat\theta_0-\theta)$, so $\|\hat\theta_k-\theta\|\leq p_k\|P_0^{-1}\|\|\hat\theta_0-\theta\|\rightarrow0$ almost surely.
\end{proof}

\printbibliography

@book{pintelon2012system,
  title={System identification: a frequency domain approach},
  author={Pintelon, Rik and Schoukens, Johan},
  year={2012},
  publisher={John Wiley \& Sons}
}

@ARTICLE{goelrecursive2020,
  author={Goel, Ankit and Bruce, Adam L. and Bernstein, Dennis S.},
  journal={IEEE Control Systems Magazine}, 
  title={Recursive Least Squares With Variable-Direction Forgetting: Compensating for the Loss of Persistency [Lecture Notes]}, 
  year={2020},
  volume={40},
  number={4},
  pages={80-102},
  doi={10.1109/MCS.2020.2990516}}

@book{astrom1995adaptive,
  title={Adaptive Control},
  author={{\AA}str{\"o}m, Karl J. and Wittenmark, Bj{\"o}rn},
  edition={2nd},
  year={1995},
  publisher={Addison-Wesley},
  address={Reading, MA},
  isbn={0-201-55866-1}
}

@inproceedings{adetola2006excitation,
  title={Excitation Signal Design for Parameter Convergence in Adaptive Control of Linearizable Systems},
  author={Adetola, Veronica and Guay, Martin},
  booktitle={Proceedings of the 45th IEEE Conference on Decision and Control},
  pages={447--452},
  year={2006},
  doi={10.1109/CDC.2006.377322}
}

@article{cao2007intelligent,
  title={Intelligent Excitation for Adaptive Control with Unknown Parameters in Reference Input},
  author={Cao, Chengyu and Hovakimyan, Naira and Wang, Jiang},
  journal={IEEE Transactions on Automatic Control},
  volume={52},
  number={8},
  pages={1525--1532},
  year={2007},
  doi={10.1109/TAC.2007.902780}
}
\end{document}